\newtheorem{theorem}{Theorem}[section]
\newtheorem{lemma}[theorem]{Lemma}
\newtheorem{remark}[theorem]{Remark}
\newcommand{\cali}[1]{\mathscr{#1}}
\numberwithin{equation}{section}
\newcommand{\ddc}{{dd^c}}
\newcommand{\dbar}{{\overline\partial}}
\newcommand{\ddbar}{{\partial\overline\partial}}
\newcommand{\supp}{{\rm supp}}
\newcommand{\id}{{\rm Id}}
\newcommand{\Cc}{\cali{C}}
\newcommand{\Rc}{\cali{R}}
\newcommand{\Sc}{\cali{S}}
\title{Speed of convergence towards attracting sets for endomorphisms of
$\mathbb P^k$}
\author{Johan Taflin}
\begin{document}

\maketitle

\begin{abstract}
Let $f$ be a non-invertible holomorphic endomorphism of $\mathbb P^k$ having an
attracting set $A.$ We show that, under some natural assumptions, $A$ supports a
unique invariant positive closed current $\tau,$ of the right bidegree and of
mass $1.$ Moreover, if $R$ is a current supported in a small neighborhood of $A$
then its push-forwards by $f^n$ converge to $\tau$ exponentially fast. We also
prove that the equilibrium measure on $A$ is hyperbolic.
\end{abstract}
\section{Introduction}\label{sec intro}
Let $f$ be a holomorphic endomorphism of algebraic degree $d\geq2$ on the
complex projective space $\mathbb P^k.$ A compact subset $A$ of $\mathbb P^k$
is called an \textit{attracting set} if it has a \textit{trapping neighborhood}
$U$ i.e. $f(U)\Subset U$ and $A=\cap_{n\geq0}f^n(U)$ where
$f^n:=f\circ\cdots\circ f,$ $n$ times. It follows that $A$ is invariant,
$f(A)=A.$
Furthermore, if $A$ contains a dense orbit then $A$ is a \textit{trapped
attractor}.
Typical examples of such objects are fractal and their underlying dynamics are
hard to study. We refer to \cite{mil-attractor}, \cite{ruelle-book} for general
discussions on attractors and to \cite{fw-attractor},
\cite{jonsson-weickert-attractor}, \cite{fs-example},
\cite{bdm-elliptic}, \cite{t-elliptic} and references therein for examples of
different types of attractors in $\mathbb P^2.$

Attracting sets are stable under small perturbations. Indeed, if $f$ has an
attracting set $A=\cap_{n\geq0}f^n(U)$ then any small perturbation $f_\epsilon$
of $f$ has an attracting set defined by
$A_\epsilon=\cap_{n\geq0}f_\epsilon^n(U).$ For example, when $f$ restricted to
$\mathbb
C^k$ defines a polynomial self-map then the hyperplane at infinity $\mathbb
P^k\setminus\mathbb C^k$ is an
attracting set. In the same way, it is easy to create examples where the
attracting set is a projective subspace of arbitrary dimension. In this paper,
we
consider a family of endomorphisms, stable under small perturbations, which
contains these examples. It was introduced by Dinh in \cite{d-attractor} and we
briefly recall the context.

In the sequel, we always assume that $f$ possesses an attracting set $A$ which
has a trapped neighborhood $U$ satisfying the following properties. There exist
an integer $1\leq p \leq k-1$ and two projective subspaces $I$ and $L$ of
dimension
$p-1$ and $k-p$ respectively such that $I\cap U=\varnothing$ and $L\subset U.$
We do not assume that $L$ and $I$ are invariant.
Since $I\cap L=\varnothing,$ for each $x\in L$ there exists a unique
projective subspace $I(x)$ of dimension $p$ which contains $I$ and such that
$L\cap
I(x)=\{x\}.$ Furthermore, for each $x\in L$ we ask that $U\cap I(x)$ is strictly
convex as a subset of $I(x)\setminus I\simeq \mathbb C^p.$ All these
assumptions are stable
under small perturbations of $f.$ The geometric
assumption on $U$ is slightly stronger than the one of Dinh, who only requires
$U\cap I(x)$ to be star-shaped at $x.$ We need convexity in order the solve the
$\overline\partial$-equation on $U.$ Indeed, under our assumption $U$ is a
$(p-1)$-convex domain in the sense of \cite{henkin-leiterer}.

If $E$ is a subset of $\mathbb P^k,$ let $\Cc_q(E)$ denote the set of all
positive closed currents of bidegree
$(q,q),$ supported in $E$ and of mass $1.$ 
It is well known that for any integer $1\leq q\leq k$ and any smooth form $S$
in $\Cc_q(\mathbb P^k),$ the sequence
$d^{-qn}(f^n)^*(S)$ converges to a positive closed current $T^q$ of
bidegree $(q,q)$ called the \textit{Green current of order $q$} of $f.$ We
refer to \cite{ds-lec} for a detailed exposition on these currents and their
effectiveness in holomorphic dynamics.

When $q=k,$ it gives the
\textit{equilibrium measure} of $f,$ $\mu:=T^{k}.$ It is
exponentially mixing and it is the unique measure of maximal entropy $k\log d$
on $\mathbb P^k.$ Moreover, it is hyperbolic and all its Lyapunov exponents
are larger or equal to $(\log d)/2.$ The dynamics outside the support of $\mu$
is not very well
understood. The aim of this paper is to continue the investigation started in
\cite{d-attractor} on the attracting sets described above, which do not
intersect $\supp(\mu).$ Indeed, since $I\cap U=\varnothing,$ by
regularization there exists a smooth form
$S\in\Cc_{k-p+1}(\Omega),$ where $\Omega:=\mathbb P^k\setminus \overline U.$
As $f^{-1}(\Omega)\subset \Omega,$ it follows that $\supp(T^{k-p+1})\cap
U=\varnothing,$ and hence $\supp(T^q)\cap U=\varnothing$ if $q\geq k-p+1.$

The set $\Cc_p(U)$ is non-empty since it contains
the current $[L]$ of integration on $L$ and its regularizations in $U.$ In the
situation described above, Dinh proved that if $R$ is a continuous element of
$\Cc_p(U)$ then its normalized push-forwards by $f^n,$ $d^{-(k-p)n}(f^n)_*(R),$
converge to a current $\tau$ which is independent of the choice of $R.$
Moreover, the current $\tau$ gives us information on the geometry of $A$ and
on the dynamics of $f_{|A}$: it is
woven, supported in $A$ and invariant i.e. $f_*(\tau)=d^{k-p}\tau.$ Our
main result is that, with a natural additional assumption on $f_{|U},$ stable
under
small perturbations, we obtain an explicit exponential speed of the above
convergence
for any
$R$ in $\Cc_p(U).$

\begin{theorem}\label{th main}
Let $f$ and $\tau$ be as above and assume that
$\|\wedge^{k-p+1}Df(z)\|<1$
on $\overline U.$ There is a
constant $0<\lambda<1$ such that for each $0<\alpha\leq2$ the following
property holds. There exists $C>0$
such that for any element $R$ of $\Cc_p(U)$ and
any $(k-p,k-p)$-form $\varphi$ of class $\mathcal C^\alpha$ on $\mathbb P^k$ we
have 
\begin{equation}\label{eq main}
|\langle d^{-(k-p)n}(f^n)_*(R)-\tau,\varphi\rangle|\leq
C\lambda^{n\alpha/2}\|\varphi\|_{
\mathcal C^\alpha}.
\end{equation}
In particular, $\tau$ is the unique invariant current in $\Cc_p(U)$ and
$d^{-(k-p)n}(f^n)_*(R)$ converge to $\tau$ uniformly on $R\in\Cc_p(U).$
\end{theorem}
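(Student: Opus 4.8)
The plan is to estimate the pairing in \eqref{eq main} by the $dd^c$-method: realise the current whose test integral we must control as $dd^c$ of a potential supported near $A$, move the push-forward onto the potential, and show that the normalised push-forwards of the potential decay geometrically — which is exactly where the hypothesis $\|\wedge^{k-p+1}Df\|<1$ is used. A regularisation/interpolation argument then upgrades the rate to $\lambda^{n\alpha/2}$ for every $\alpha\in(0,2]$, and uniqueness follows formally.

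\textbf{Reduction.} Since $f(U)\Subset U$ and $A=\cap_{n\ge0}f^n(U)\subset f(U)$, the current $\tau$, being supported in the compact set $A\Subset U$, lies in $\Cc_p(U)$; and it is invariant, so $d^{-(k-p)n}(f^n)_*\tau=\tau$. Hence \eqref{eq main} reduces to bounding $|\langle d^{-(k-p)n}(f^n)_*(R-\tau),\varphi\rangle|$. As $R$ and $\tau$ are positive closed $(p,p)$-currents of mass $1$, they are cohomologous on $\mathbb P^k$, so $R-\tau$ is $dd^c$-exact. It is convenient either to regularise $R$ and $\tau$ inside $U$ (keeping masses $1$) and pass to the limit at the end, or to fix a smooth model $R_0\in\Cc_p(U)$ (a regularisation of $[L]$), prove the bound for $R-R_0$ and for $R_0-d^{-(k-p)m}(f^m)_*R_0$ uniformly in $m$, and let $m\to\infty$ to reach $\tau$.

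\textbf{Solving $dd^c$ on $U$ and transferring the push-forward.} Here the geometric hypotheses enter: since each $U\cap I(x)$ is strictly convex, $U$ is $(p-1)$-convex in the sense of Henkin--Leiterer, so the $\overline\partial$-equation, hence the $dd^c$-equation in the relevant bidegree, is solvable on $U$ with estimates. This yields a $(p-1,p-1)$-current $u$ with $dd^c u=R-\tau$, supported in $U$, and bounded $\|u\|\le C$ in a suitable norm \emph{uniformly over} $R\in\Cc_p(U)$, the uniformity coming from the fact that $R-\tau$ always has mass $\le2$ and avoids $I$. Then
\[
\langle d^{-(k-p)n}(f^n)_*(R-\tau),\varphi\rangle=\big\langle d^{-(k-p)n}(f^n)_*u,\,dd^c\varphi\big\rangle=d^{-(k-p)n}\big\langle u,\,(f^n)^*(dd^c\varphi)\big\rangle .
\]
The form $(f^n)^*(dd^c\varphi)$ has bidegree $(k-p+1,k-p+1)$, and on $\overline U$ its pull-back from $dd^c\varphi$ is governed by $\wedge^{k-p+1}Df^n$, whose norm on $\overline U$ is, by the chain rule, at most $\theta^{n}$ with $\theta:=\sup_{\overline U}\|\wedge^{k-p+1}Df\|<1$; this geometric gain beats the factor $d^{(k-p)n}$. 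Pairing with $u$ (supported in $\overline U$, uniformly bounded in the dual norm) and dividing by $d^{(k-p)n}$ gives \eqref{eq main} for $\alpha=2$ with some $0<\lambda<1$. For $\alpha=0$ the left side of \eqref{eq main} is $\le 2\|\varphi\|_{\Cc^0}$ by the mass bound; for $0<\alpha<2$ one regularises $\varphi$ at scale $\delta$, so $\|\varphi_\delta\|_{\Cc^2}\lesssim\delta^{\alpha-2}\|\varphi\|_{\Cc^\alpha}$ and $\|\varphi-\varphi_\delta\|_{\Cc^0}\lesssim\delta^{\alpha}\|\varphi\|_{\Cc^\alpha}$, applies the cases $\alpha=2$ and $\alpha=0$ to the two pieces, and optimises $\delta\asymp\lambda^{n/2}$ to obtain the rate $\lambda^{n\alpha/2}$. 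Finally, if $R'\in\Cc_p(U)$ is invariant then $R'=d^{-(k-p)n}(f^n)_*R'\to\tau$, so $R'=\tau$, and the convergence is uniform in $R$ since $C$ does not depend on $R$.

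\textbf{The main obstacle.} The delicate point is the interplay between the two steps above: one must solve $dd^c u=R-\tau$ on the $(p-1)$-convex domain $U$ with a bound in a norm \emph{weak enough} to hold uniformly over all of $\Cc_p(U)$, yet \emph{strong enough} that, dualised against $u$, the growth of $(f^n)^*(dd^c\varphi)$ on $\overline U$ is controlled by the contracting quantity $\wedge^{k-p+1}Df$ alone and not by the a priori expanding $\wedge^{k-p}Df$ or $Df$. Making this precise forces one to use the positive closed structure of $R$ and $\tau$ — equivalently, that $u$ is a difference of potentials whose $dd^c$ are positive closed $(p,p)$-currents of bounded mass — so that the normalised push-forwards of $u$ genuinely decay rather than merely remaining bounded. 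I expect the bulk of the technical work (and the only place the convexity of $U$, as opposed to Dinh's weaker star-shapedness, is really needed) to be concentrated there.
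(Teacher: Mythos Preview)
Your plan has a genuine gap at its core step. You want a $(p-1,p-1)$-current $u$ supported in $U$ with $dd^c u=R-\tau$ and uniformly bounded mass. Such a $u$ cannot exist in general: if it did, then for every continuous $(k-p,k-p)$-form $\phi$ on $U$ with $dd^c\phi=0$ one would have $\langle R-\tau,\phi\rangle=\langle u,dd^c\phi\rangle=0$. But this pairing is typically nonzero --- indeed the paper devotes Lemma~\ref{le harnack} precisely to showing that these pairings decay like $\lambda_1^{n}$ under iteration, via the structural discs of Section~\ref{sec disc} and Harnack's inequality; they are not zero to start with. The ``main obstacle'' you flag is therefore not a matter of choosing the right norm for $u$: it is a cohomological obstruction, and your scheme has no mechanism to handle it.

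There is also a bidegree mismatch in your appeal to $(p-1)$-convexity. The Henkin--Leiterer theory on a strictly $(p-1)$-convex domain solves $\bar\partial$ with $L^\infty$ estimates on $(s,r)$-forms with $r\ge k-p+1$ (Theorems~\ref{th hl} and~\ref{th vanish}); this is exactly the range needed to solve $dd^c\psi=dd^c\varphi$ on the \emph{test-form} side, where $dd^c\varphi$ has bidegree $(k-p+1,k-p+1)$, and it is the wrong range for producing a $(p-1,p-1)$-potential on the current side. The paper accordingly works dually: for $\varphi\in\mathcal C^2$ one constructs a continuous $\psi$ with $dd^c\psi=dd^c\varphi$ on $U$ and $\|\psi\|_{\infty,U}\lesssim\|dd^c\varphi\|_{\infty,U}$ (Theorem~\ref{th ddc}), then splits
\[
\langle R_n-\tau,\varphi\rangle=\langle R_l-\tau,\varphi_i-\psi_i\rangle+\langle R_l-\tau,\psi_i\rangle,\qquad \varphi_i:=d^{-i(k-p)}(f^i)^*\varphi,\ \ n=l+i.
\]
The second term is controlled by the hypothesis on $\wedge^{k-p+1}Df$ exactly as you describe. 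The first term, with $dd^c(\varphi_i-\psi_i)=0$, is handled by the structural-disc/Harnack contraction of Lemma~\ref{le harnack}; one then balances $i$ against $l$. Your sketch has no analogue of this second mechanism, and without it the argument cannot close. Your interpolation from $\alpha=2$ down to general $\alpha$ and the uniqueness deduction are fine.
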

Recall that $f$ induces a self-map $Df$ on the tangent bundle $T\mathbb P^k$
which also gives a self-map $\wedge^qDf$ on the exterior power
$\wedge^qT\mathbb P^k,$ $1\leq q\leq k.$
In the sequel, all the norms on $\mathcal C^\alpha,$ $L^r,$ etc. are with
respect to the Fubini-Study metric on $\mathbb P^k.$ It also gives a uniform
norm which induces an operator norm for $\wedge^qDf.$

In the
same spirit as Theorem \ref{th main}, we proved in \cite{t-vitesse} that for a
generic current $S$ in $\Cc_1(\mathbb P^k),$ the sequence $d^{-n}(f^n)^*(S)$
converges to $T$ exponentially fast. However, the contexts are quite different.
Here, we consider currents of arbitrary bidegree which are in general much
harder to handle. Moreover, in \cite{t-vitesse} we deeply use that $T$ has
Hölder continuous local potentials. In the present situation, we can expect that
the attracting current $\tau$ is always more singular. The idea to
prove Theorem \ref{th main} is to use Henkin-Leiterer's solution with estimates
of the $\ddc$-equation on $U$ in order to study separately the harmonic and
non-harmonic
parts of the left hand side term of \eqref{eq main}. When $\ddc \varphi=0$ on
$U,$ we use the ``geometry''
of $\Cc_p(U),$ introduced in \cite{d-attractor} and \cite{ds-geo}, and Harnack's
inequality to obtain exponential estimates. In order to deal with the
non-harmonic
part, we use the assumption on $\|\wedge^{k-p+1}Df\|.$ This
assumption comes naturally in several basic examples and their
perturbations.

In \cite{d-attractor}, Dinh also showed that the \textit{equilibrium measure
associated to $A$}, defined by
$\nu:=\tau\wedge T^{k-p},$ is invariant, mixing and of maximal entropy
$(k-p)\log d$ on
$A.$ Theorem \ref{th main} is a first step in order to obtain other ergodic and
stochastic properties on $\nu$ as exponential mixing or central limit
theorem. We postpone this question in a future work.

Under the same assumptions, we deduce from the work of de Thélin
\cite{dethelin-lyap}, see also \cite{dupont-large-entropy}, the following result
on $\nu.$

\begin{theorem}\label{th hyp}
If $f$ is as in Theorem \ref{th main}, then the measure $\nu$ is hyperbolic.
More precisely, counting with multiplicity it has $k-p$ Lyapunov exponents
larger than or equal to $(\log d)/2$ and $p$
Lyapunov exponents strictly smaller than $-(k-p)(\log d)/2.$
\end{theorem}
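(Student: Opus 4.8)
The plan is to read off the Lyapunov exponents of $\nu$ from its structure as the wedge product $\nu=\tau\wedge T^{k-p}$, using de Th\'elin's theorem \cite{dethelin-lyap} for the $k-p$ exponents on the expanding side and the hypothesis $\|\wedge^{k-p+1}Df\|<1$ for the $p$ exponents on the contracting side. Recall from Dinh's work, quoted above, that $\nu$ is a well-defined probability measure (the wedge product makes sense because $T$ has H\"older continuous local potentials), that it is invariant and mixing, hence ergodic, with metric entropy $h_\nu(f)=(k-p)\log d$, and that $\supp(\nu)\subset\supp(\tau)\subset A\subset\overline U$. Denote by $\chi_1\ge\cdots\ge\chi_k$ the Lyapunov exponents of $(f,\nu)$ counted with multiplicity; these are provided by the Oseledets--Kingman theorem applied to the linear cocycle $Df$ over $(f,\nu)$, which is legitimate since $\log^+\|Df\|$ is bounded on $\mathbb P^k$ (we only use one-sided exponents, so the non-invertibility of $f$ is not an issue).

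For the expanding side, I would invoke de Th\'elin's theorem on Lyapunov exponents of measures that are intersections of the Green current with limits of normalized direct images of subvarieties. Here $\tau=\lim_n d^{-(k-p)n}(f^n)_*(R)$ for any continuous $R\in\Cc_p(U)$, for instance for regularizations of $[L]$ with $\dim L=k-p$, and $\nu=\tau\wedge T^{k-p}$, so that $\nu$ falls exactly into this framework (see also the entropy-based approach of \cite{dupont-large-entropy}, the input $h_\nu(f)=(k-p)\log d$ playing the role of the ``largeness of entropy'' relative to the codimension-$p$ structure). The conclusion is that $\nu$ has at least $k-p$ Lyapunov exponents that are $\ge(\log d)/2$, i.e. $\chi_{k-p}\ge(\log d)/2$, and hence $\chi_1+\cdots+\chi_{k-p}\ge (k-p)(\log d)/2$.

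For the contracting side, I would use that $z\mapsto\|\wedge^{k-p+1}Df(z)\|$ is continuous on the compact set $\overline U$ and $<1$ there, so $\rho:=\max_{\overline U}\|\wedge^{k-p+1}Df\|\in(0,1)$. Since $\wedge^{k-p+1}Df^n(z)=\wedge^{k-p+1}Df(f^{n-1}z)\circ\cdots\circ\wedge^{k-p+1}Df(z)$ and the operator norm is submultiplicative, $\|\wedge^{k-p+1}Df^n(z)\|\le\rho^n$ for all $n\ge1$ and all $z\in\supp(\nu)$. The top exponent of the $(k-p+1)$-st exterior power cocycle being $\chi_1+\cdots+\chi_{k-p+1}$, it follows that for $\nu$-a.e.\ $z$,
\[
\chi_1+\cdots+\chi_{k-p+1}=\lim_{n\to\infty}\frac1n\log\|\wedge^{k-p+1}Df^n(z)\|\le\log\rho<0 .
\]
Subtracting the lower bound of the previous step gives
\[
\chi_{k-p+1}\le\log\rho-(k-p)\frac{\log d}{2}<-(k-p)\frac{\log d}{2},
\]
and since $\chi_{k-p+1}\ge\cdots\ge\chi_k$, all $p$ exponents $\chi_{k-p+1},\dots,\chi_k$ are strictly below $-(k-p)(\log d)/2$. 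Together with $\chi_1,\dots,\chi_{k-p}\ge(\log d)/2>0$, no exponent vanishes, so $\nu$ is hyperbolic with the announced repartition of exponents.

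The one step that requires genuine care is the appeal to de Th\'elin's theorem in the second paragraph: one must verify that $\nu=\tau\wedge T^{k-p}$ really satisfies the hypotheses there --- in particular that the expected extra singularity of $\tau$ compared with $T$ is not an obstruction, and that the dynamically defined measure coincides with the wedge product entering de Th\'elin's construction --- which is where the results of \cite{dethelin-lyap} and \cite{dupont-large-entropy}, together with Dinh's computation of $h_\nu(f)$, are used essentially as black boxes. Everything else --- the submultiplicativity estimate for exterior powers and the elementary combination of the two bounds --- is routine.
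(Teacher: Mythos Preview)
Your proof is correct and follows the same two-step strategy as the paper: use de Th\'elin/Dupont for the expanding exponents and the hypothesis $\|\wedge^{k-p+1}Df\|<1$ together with submultiplicativity for the contracting ones, then subtract. The only noteworthy difference is the order in which the ingredients are combined. You invoke the sharp corollary of de Th\'elin/Dupont (``$h_\nu>(k-p-1)\log d\Rightarrow\chi_{k-p}\ge(\log d)/2$'') directly, and then use the contraction bound to estimate $\chi_{k-p+1}$. The paper instead quotes only the entropy inequality $h(\nu)\le(a-1)\log d+2\sum_{i\ge a}\chi_i^+$, which by itself gives only that at least $k-p$ exponents are \emph{positive}; it then uses the contraction hypothesis to force exactly $k-p$ positive exponents ($c=k-p$), and only afterwards extracts $\chi_{k-p}\ge(\log d)/2$ from the inequality. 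Your route is slightly more direct but relies on the sharper formulation of the cited black box; the paper's route shows explicitly how the contraction hypothesis is already needed to pin down the number of positive exponents.
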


\section{Structural discs of currents}\label{sec disc}
In this section we recall the notion of structural varieties of currents. It was
introduced by Dinh and Sibony in order to put a geometry on the
space $\Cc_p(U)$ which is of infinite dimension, see \cite{ds-geo}
and \cite{d-attractor}. The definition of structural varieties is based on
slicing theory and they can be seen as
complex subvarieties inside $\Cc_p(U).$ In \cite{ds-superpot}, the authors
developed the notion of super-potential which involves more deeply this
geometry.

Slicing theory can be seen as a generalization to currents of restriction of
smooth forms to submanifolds. We will briefly explain it in our context and
refer to \cite{fed-book} for a more complete account.

Let $U$ be an open subset of $\mathbb P^k$ satisfying the geometric hypothesis
as above. Let $V$ be a complex manifold of
dimension $l.$ We denote by $\pi_U$ and $\pi_V$ the canonical projections of
$U\times V$ to $U$ and $V$ respectively. If $\Rc$ is a positive closed current
of bidegree $(p,p)$ on $U\times V$ with $\pi_U(\supp(\Rc))\Subset
U$ then, for all $\theta$ in $V$, the slice
$\langle\Rc,\pi_V,\theta\rangle$ is well defined. For any
$(k-p,k-p)$-form $\phi$ on $U\times V$ we have
$$\langle \Rc,\pi_V,\theta\rangle(\phi)=\lim_{\epsilon\to 0}\langle
\Rc\wedge\pi_V^*(\psi_{\theta,\epsilon}),\phi\rangle,$$
where $\psi_{\theta,\epsilon}$ is an appropriate approximation in $V$ of the
Dirac mass at $\theta.$ It is a $(p+l,p+l)$-current
on $U\times V$ supported on $U\times\{\theta\}$ which can be identified
to a $(p,p)$-current on $U.$ A family of currents $(R_\theta)_{\theta\in V}$ in
$\Cc_p(U)$ is a \textit{structural variety} if there exists a positive closed
current $\Rc$ in $U\times V$ such that $R_\theta=\langle
\Rc,\pi_V,\theta\rangle.$ When $V$ is isomorphic to the unit disc of $\mathbb
C,$ we call $(R_\theta)_{\theta\in V}$ a \textit{structural disc}.

Recall that in our situation $f(U)\Subset U.$ Under the geometrical assumption
on
$U,$ Dinh constructed in \cite[p.233]{d-attractor} a family of structural discs
in $\Cc_p(U).$ He uses that for each $x\in L$ the set $I(x)\cap U$ is
star-shaped at $x$ to obtain a property similar to star-sharpness for
$\Cc_p(U).$ 

More precisely, up to an automorphism, we can assume that
$$I=\{x\in\mathbb P^k\,|\,x_i=0,\ 0\leq i\leq k-p\},\ L=\{x\in\mathbb P^k\,|\,
x_i=0,\ k-p+1\leq i\leq k\},$$
where $x=[x_0:\cdots:x_k]$ are the homogeneous coordinates of $\mathbb P^k.$
For $\theta\in\mathbb C,$ $A_\theta(x):=[x_0:\cdots:x_{k-p}:\theta
x_{k-p+1}:\cdots:\theta x_k]$ is an automorphism of $\mathbb P^k$ except for
$\theta=0$ where it is the projection of $\mathbb P^k\setminus I$ on $L.$ Let
set $U':=f(U).$ As $I(x)\cap U$ is star-shaped at $x\in L,$ there exists a
simply connected open neighborhood $V\subset \mathbb C$ of $[0,1]$ such that
$A_\theta(U')\Subset U$ for all $\theta$ in $\overline V.$ If $S$ is in
$\Cc_p(U')$ then the family $(S_\theta)_{\theta\in V}$ with
$S_\theta:=(A_\theta)_*(S)$ defined a structural disc. Indeed, if 
$\Lambda:\mathbb P^k\times V\to \mathbb P^k$ is the meromorphic map defined by
$\Lambda(x,\theta)=(A_\theta)^{-1}(x)$ and $\Sc:=\Lambda^*S$ then
$S_\theta=\langle \Sc,\pi_V,\theta\rangle,$ see \cite{d-attractor} for more
details. For any $S$ in $\Cc_p(U'),$ we have that $S_1=S$
and $S_0=[L]$ which is independent of $S.$ In other words, any current in
$\Cc_p(U')$ is linked to $[L]$ by a structural disc in $\Cc_p(U).$ Moreover,
$S_\theta$
depends continuously on $\theta$ and we have the
following important property.

\begin{lemma}[\cite{d-attractor}]\label{le harmo}
Let $S$ be in $\Cc_p(U')$ and $(S_\theta)_{\theta\in V}$ be the structural disc
described
above.
If $\phi$ is a real continuous $(k-p,k-p)$-form with $\ddc\phi=0$ on $U$ then
the
function $\theta\mapsto \langle
S_\theta,\phi\rangle$ is harmonic on $V.$
\end{lemma}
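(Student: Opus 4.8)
The goal is to show that $\theta \mapsto \langle S_\theta, \phi\rangle$ is harmonic on $V$, where $S_\theta = \langle \Sc, \pi_V, \theta\rangle$ with $\Sc = \Lambda^* S$. My plan is to reduce the statement to a standard fact about slices of positive closed currents: if $\Sc$ is positive closed of bidegree $(p,p)$ on $U \times V$ with $\pi_U(\supp \Sc) \Subset U$ and $\Phi$ is a form on $U \times V$ with $\ddc \Phi = 0$ on a neighborhood of $\supp \Sc$, then $\theta \mapsto \langle \langle \Sc, \pi_V, \theta\rangle, \Phi(\cdot,\theta)\rangle$ is harmonic. Concretely, write $h(\theta) := \langle S_\theta, \phi\rangle$. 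The first step is to express $h$ via the defining limit of the slice: $h(\theta) = \lim_{\epsilon \to 0} \langle \Sc \wedge \pi_V^*(\psi_{\theta,\epsilon}), \pi_U^* \phi\rangle$. Since $S_\theta$ depends continuously on $\theta$ and the mass is uniformly bounded, $h$ is a continuous function of $\theta$, so by Weyl's lemma it suffices to show $\ddc_\theta h = 0$ in the sense of distributions on $V$.

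Next I would test against a smooth compactly supported function $\chi$ on $V$ and compute $\int_V h(\theta)\, \ddc_\theta \chi(\theta)$. Using the slice representation and the fact that integrating a slice against $\chi$ recovers integration against the current, this becomes $\langle \Sc, \pi_U^*\phi \wedge \pi_V^*(\ddc\chi)\rangle$. Now $\pi_V^*(\ddc \chi) = \ddc(\pi_V^* \chi)$, and since $\pi_U^*\phi$ is a form pulled back from $U$ with $\ddc \phi = 0$ on $U$, we have $\ddc(\pi_U^*\phi) = \pi_U^*(\ddc\phi) = 0$ on $U \times V$. Therefore $\pi_U^*\phi \wedge \ddc(\pi_V^*\chi) = \ddc\big( (\pi_U^*\phi) \wedge \pi_V^*\chi \big)$ up to sign, using that $\phi$ has even degree so no sign obstruction appears in the Leibniz rule for $\ddc$ acting on the product. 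Since $\Sc$ is closed and $(\pi_U^*\phi)\wedge \pi_V^*\chi$ has compact support in the $V$-direction — here I use $\pi_U(\supp\Sc)\Subset U$ to ensure the pairing is legitimate and Stokes applies — we get $\langle \Sc, \ddc((\pi_U^*\phi)\wedge \pi_V^*\chi)\rangle = \langle \ddc\Sc, (\pi_U^*\phi)\wedge\pi_V^*\chi\rangle = 0$. Hence $\int_V h \,\ddc\chi = 0$ for all test $\chi$, so $h$ is harmonic.

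The one technical point requiring care — and the main obstacle — is the justification that slicing commutes with the operations above, in particular that $\int_V \langle \Sc, \pi_V, \theta\rangle(\Psi(\cdot,\theta))\, g(\theta)$ equals $\langle \Sc, \Psi \wedge \pi_V^* g\rangle$ for a form $\Psi$ on $U\times V$ and a smooth function $g$ on $V$; this is the Fubini-type property of slicing and holds precisely because $\Sc$ is positive closed and $\pi_U(\supp\Sc) \Subset U$, allowing the regularizations $\psi_{\theta,\epsilon}$ to be handled by dominated convergence with the uniformly bounded masses of the slices. This is classical slicing theory, for which I would cite \cite{fed-book}; alternatively one can observe that $\Sc = \Lambda^* S$ has a fairly explicit structure near $\theta \in [0,1]$ since $A_\theta$ is an automorphism for $\theta \neq 0$, making the slices $(A_\theta)_* S$ honest pushforwards depending real-analytically (hence smoothly) on $\theta$ away from $0$, and continuity across $\theta = 0$ handles that point.
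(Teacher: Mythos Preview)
The paper does not prove this lemma; it is quoted from Dinh \cite{d-attractor} without argument, so there is no ``paper's own proof'' to compare against. Your strategy---reduce to the Fubini property of slices, $\int_V h\,\ddc\chi=\langle\Sc,\pi_U^*\phi\wedge\pi_V^*\ddc\chi\rangle$, and then integrate by parts using that $\Sc$ is closed---is exactly the standard one used by Dinh and Dinh--Sibony in this context.

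There is, however, a genuine slip. You write $\pi_U^*\phi\wedge\ddc(\pi_V^*\chi)=\ddc\big((\pi_U^*\phi)\wedge\pi_V^*\chi\big)$, justifying it by ``$\ddc\phi=0$ and $\phi$ has even degree so no sign obstruction appears in the Leibniz rule for $\ddc$''. The operator $\ddc$ does \emph{not} satisfy such a Leibniz rule: for smooth forms one has
\[
\ddc(\alpha\wedge\beta)=\ddc\alpha\wedge\beta+\alpha\wedge\ddc\beta+d\alpha\wedge d^c\beta-d^c\alpha\wedge d\beta,
\]
and the cross terms involving $d\phi$ do not vanish just because $\ddc\phi=0$. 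The correct fix is to use that $\Sc$ is closed \emph{of pure bidegree} $(p,p)$, so that $d\Sc=0$ forces $\partial\Sc=0$ and $\dbar\Sc=0$ separately. Integrating by parts first with $\partial$ and then with $\dbar$ gives, for smooth $\phi$,
\[
\langle\Sc,\phi\wedge\partial\dbar\psi\rangle=-\langle\Sc,\partial\phi\wedge\dbar\psi\rangle=\langle\Sc,\partial\dbar\phi\wedge\psi\rangle=0,
\]
which is what you need. A second point you should flag: $\phi$ is only assumed \emph{continuous}, so $d\phi$ is a current and the manipulation above is not literal. One handles this by regularizing $\phi$ in local charts near $\pi_U(\supp\Sc)$ (convolution in coordinates preserves $\ddc\phi=0$), proving the identity for the smooth approximants, and passing to the limit in $\langle\Sc,\phi\wedge\ddc\psi\rangle$ by uniform convergence.
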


\section{q-Convex set and d-bar equation}
The concept of $q$-convexity generalizes both Stein and compact
manifolds. Andreotti and Grauert \cite{andreotti-grauert} obtained vanishing or
finiteness theorems for $q$-convex manifolds and, in \cite{henkin-leiterer},
Henkin and Leiterer developed a similar theory using integral representations.
In particular, they obtained solutions of the $\dbar$-equation with explicit
estimates, which play a key role in our proof. For this reason, we use the
conventions of \cite{henkin-leiterer}.

If $1\leq q\leq k$ is an integer then a real $\mathcal C^2$ function $\rho$ on
an open subset $V\subset \mathbb P^k$ is called \textit{$q$-convex} if, in
any holomorphic local coordinates, the Hermitian form
$$L_\rho(x)t=\sum_{i,j=1}^k\frac{\partial^2\rho(x)}{\partial z_i\partial
\overline z_j}t_i\overline t_j$$
has at least $q$ strictly positive eigenvalues at any point $x\in V.$

Let $0\leq q\leq k-1.$ We say that an open subset $D$ of $\mathbb
P^k$ is \textit{strictly $q$-convex} if there exists a $(q+1)$-convex function
$\rho$ in a neighborhood $V$ of $\partial D$ such that
$$D\cap V=\{x\in V\,|\, \rho(x)<0\}.$$
Moreover, if the same condition is satisfied with $V$ a neighborhood of
$\overline D$ then $D$ is called \textit{completely strictly $q$-convex}.

The strict $q$-convexity has the following important consequence, see
\cite[Theorem 11.2]{henkin-leiterer}.
\begin{theorem}\label{th hl}
 Let $D$ be a strictly $q$-convex open subset of $\mathbb P^k$ with $\mathcal
C^2$ boundary. If $\phi$ is a continuous $\dbar$-exact form of bidegree $(r,s)$
in a neighborhood of
$\overline D$ with $0\leq s\leq k,$ $k-q\leq r\leq k,$ then there exists a
continuous $(s,r-1)$-form $\psi$ on $D$
such that $\dbar\psi=\phi$ and
$$\|\psi\|_{\infty,D}\leq C\|\phi\|_{\infty,D}$$
for some $C>0$ independent of $\phi.$
\end{theorem}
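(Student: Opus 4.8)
My plan is to follow the Henkin--Leiterer method of integral representations on $q$-convex domains, so I only sketch the main steps. The goal is to exhibit an explicit linear solution operator $\phi\mapsto\psi=T\phi$, given by integrating $\phi$ against a Koppelman-type kernel on $\overline D$, and to read the bound $\|\psi\|_{\infty,D}\leq C\|\phi\|_{\infty,D}$ off that kernel.

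First I would fix a $(q+1)$-convex defining function $\rho$ for $D$ near $\partial D$ and build a local Cauchy--Fantappi\`e (Leray) section from it. Using the Levi polynomial of $\rho$ together with Taylor's formula, one gets on a neighborhood of the diagonal in $\overline D\times\partial D$ a function $\Phi(z,\zeta)$, holomorphic in $z$, satisfying an estimate of the shape
$$2\,\mathrm{Re}\,\Phi(z,\zeta)\;\geq\;\rho(\zeta)-\rho(z)+c\,|z-\zeta|^{2}-C\sum_{j=1}^{k-q-1}\bigl|\langle a_j(\zeta),z-\zeta\rangle\bigr|^{2},$$
with $c,C>0$ and $a_1(\zeta),\dots,a_{k-q-1}(\zeta)$ spanning the at most $k-q-1$ directions along which $L_\rho$ fails to be strictly positive. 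Thus, for $\zeta\in\partial D$ and $z\in\overline D$ near $\zeta$, the zero set of $\Phi(\cdot,\zeta)$ meets $\overline D$ only inside a complex submanifold of codimension $\geq q+1$. Globalizing by combining these local sections with the homogeneous Bochner--Martinelli--Koppelman kernel on $\mathbb P^k$ and patching by a partition of unity, I would assemble a Koppelman double form $K=\sum_j K_j$, where $K_j$ has bidegree $(0,j)$ in $z$, with an interior part supported off $\partial D$ and a boundary part carried by $\partial D$. The decisive point is that the components of the boundary part that are singular along $\{\Phi(\cdot,\zeta)=0\}$ have, in the $z$-variable, antiholomorphic degree $j<k-q$, hence are annihilated against a test form of bidegree $(r,s)$ with $r\geq k-q$, while the components that survive turn out to be integrable in $\zeta$. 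This is exactly where the hypotheses $k-q\leq r\leq k$ and $0\leq s\leq k$ are used.

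With such a kernel, a Koppelman-type homotopy formula takes the shape $\phi=\dbar_z(T\phi)+T(\dbar\phi)+R\phi$, where $T\phi:=\int_D\phi\wedge K+\int_{\partial D}\phi\wedge K$ collects the integrations against the interior and boundary kernels and $R\phi$ is the remainder coming from the boundary and from the patching. Since $\phi$ is $\dbar$-exact it is in particular $\dbar$-closed, so $T(\dbar\phi)=0$, and the construction is arranged so that in the range $k-q\leq r\leq k$ the term $R\phi$ vanishes for $\dbar$-exact $\phi$; hence $\dbar(T\phi)=\phi$ and $\psi:=T\phi$ is continuous on $D$. It then remains to estimate $T\phi$ in sup norm: the interior kernel is dominated near the diagonal by $C\,|z-\zeta|^{-(2k-1)}$, which is integrable over the $2k$-real-dimensional $D$; and the surviving part of the boundary kernel is of Henkin type, dominated by $C\,|z-\zeta|^{2-2k}$ times the reciprocal of a Levi-type distance, so that a coarea estimate based on the displayed inequality bounds its integral over $\partial D$ uniformly in $z\in D$. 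Combining the two bounds yields $\|\psi\|_{\infty,D}\leq C\|\phi\|_{\infty,D}$ with $C$ depending only on $D,\rho,k,q$ and not on $\phi$.

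The hard part will be the construction and the integrability analysis of the boundary kernel: one must keep careful track of which components of the Cauchy--Fantappi\`e form become singular along $\{\Phi=0\}$, verify that precisely those components die against forms with $r\geq k-q$, and prove that the remaining components are uniformly $\zeta$-integrable over $\partial D$. This is the technical heart of the Andreotti--Grauert vanishing and finiteness theory recast through integral formulas, and it is what simultaneously forces the bidegree restrictions and delivers the continuity of the solution operator; the complete argument is carried out in \cite{henkin-leiterer}.
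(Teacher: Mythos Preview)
The paper does not prove this theorem; it simply cites \cite[Theorem 11.2]{henkin-leiterer}. Your sketch is precisely an outline of the Henkin--Leiterer integral-representation proof that the paper is invoking, so in that sense your approach and the paper's ``proof'' coincide.

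Two remarks on the sketch itself. First, the bidegree bookkeeping in the statement (and hence in your sketch) is slightly garbled: as written, a $(s,r-1)$-form $\psi$ cannot satisfy $\dbar\psi=\phi$ for $\phi$ of bidegree $(r,s)$; the intended constraint is on the antiholomorphic degree of $\phi$, and the solution should be $(r,s-1)$ (this is consistent with the application in the paper's Theorem~\ref{th ddc}, where $\Xi$ is $(r,r+1)$ with $r+1\geq k-q$ and $\Psi$ is $(r,r)$). You should state clearly that it is the $\dbar$-degree that must lie in $[k-q,k]$, and that the singular boundary components of the Leray kernel have antiholomorphic $z$-degree $<k-q$, which is why they drop out. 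Second, your handling of the remainder $R\phi$ is too quick: in the Henkin--Leiterer scheme the homotopy formula on a $q$-convex domain does not directly give $\dbar T\phi=\phi$; one first obtains a parametrix with a compact error, and the passage from ``finite-dimensional obstruction'' to ``no obstruction for $\dbar$-exact data'' is an additional functional-analytic step (this is where the Andreotti--Grauert vanishing enters). Saying ``$R\phi$ vanishes for $\dbar$-exact $\phi$'' hides exactly that step. Since you already flag this as the hard part and defer to \cite{henkin-leiterer}, the sketch is acceptable as a roadmap, but be aware that this is where the real work lies.
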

Furthermore, Andreotti and Grauert proved the following vanishing theorem, see
\cite{andreotti-grauert} and \cite[Theorem 12.7]{henkin-leiterer}.
\begin{theorem}\label{th vanish}
If $D$ is a completely strictly $q$-convex open subset of $\mathbb P^k$ with
$\mathcal C^2$ boundary then $H^{s,r}(D,\mathbb C)=0$ for any $0\leq s\leq k$
and $k-q\leq r\leq k.$
\end{theorem}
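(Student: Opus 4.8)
\medskip
\noindent
This is the Andreotti--Grauert vanishing theorem, and the plan is to first recast it in the form that theorem addresses and then outline the argument. By the Dolbeault--Grothendieck isomorphism, $H^{s,r}(D,\mathbb C)$ is canonically the sheaf cohomology $H^r(D,\Omega^s_D)$, where $\Omega^s_D$ is the sheaf of germs of holomorphic $s$-forms on $D$; so it suffices to show $H^r(D,\mathcal F)=0$ for $k-q\le r\le k$ and every locally free (indeed coherent) analytic sheaf $\mathcal F$ on $D$. To set this up, note that since $D$ is completely strictly $q$-convex there is a $(q+1)$-convex $\mathcal C^2$ function $\rho$ on an open set $V\supset\overline D$ with $D\cap V=\{\rho<0\}$; as $\mathbb P^k$ is compact, $\overline D=\{\rho\le0\}$ is a compact subset of $V$, and for each $c<0$ the closure of $\{\rho<c\}$ lies in the compact set $\{\rho\le c\}\subset D$. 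Hence $\rho|_D\colon D\to[\min_{\overline D}\rho,\,0)$ is a proper exhaustion function whose Levi form $L_\rho$ has at least $q+1$ positive eigenvalues at every point of $D$, i.e.\ $D$ is $(k-q)$-complete in the sense of Andreotti--Grauert, and the claim is exactly the vanishing $H^r(D,\mathcal F)=0$ for $r\ge k-q$ on such a manifold.

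To prove that vanishing I would follow the integral-kernel approach of Henkin--Leiterer. The local input is that on a small ``elementary'' strictly $q$-convex piece --- a convex analytic polyhedron adapted to the $q$-convex geometry, relatively compact and with $\mathcal C^2$ boundary --- every continuous $\dbar$-closed $(s,r)$-form with $k-q\le r\le k$ is $\dbar$-exact, with a uniform estimate on the solution; this comes from Cauchy--Fantappi\`e (Leray) kernels built to exploit the $q+1$ positive eigenvalues of $L_\rho$, and on such pieces Theorem~\ref{th hl} is precisely the estimate used. One then globalizes by bumping: for a regular value $c$ of $\rho$, cover $\partial\{\rho<c\}$ by finitely many coordinate charts in which $\rho$ can be pushed slightly outward by a local convex bump, and analyse each elementary enlargement with a Mayer--Vietoris (``$\dbar$-Cousin'') sequence fed by the local solvability; this shows that the restriction $H^{s,r}(\{\rho<c'\})\to H^{s,r}(\{\rho<c\})$ is onto for $c<c'$, in the bidegree range $k-q\le r\le k$. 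Iterating down to a small sublevel set near $\min_{\overline D}\rho$, where a finite elementary cover and the same Mayer--Vietoris argument give vanishing directly, yields $H^{s,r}(\{\rho<c\})=0$ for all $c<0$; a Mittag--Leffler/Runge-type approximation for $\dbar$ along the exhaustion $D=\bigcup_{c\uparrow0}\{\rho<c\}$ then transfers the vanishing to $D$.

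The delicate point is this globalization: keeping track of the bumping across critical levels of $\rho$, where the topology of the sublevel sets changes; carrying out the approximation in the exhaustion limit; and verifying that the base case near the minimum set of $\rho$ is genuinely trivial, which may first require perturbing $\rho$ within its $(q+1)$-convexity class to have well-behaved critical points. Alternatively, the whole statement follows from H\"ormander's $L^2$ method: equip $D$ with a complete K\"ahler metric and a weight $\chi\circ\rho$ with $\chi$ convex and increasing rapidly enough that the curvature term in the Bochner--Kodaira inequality is strictly positive on $(s,r)$-forms with $r\ge k-q$; this yields vanishing of the weighted $L^2$ Dolbeault cohomology in that range, and taking $\chi$ large enough makes any prescribed $\dbar$-closed form $L^2$, which removes the weight.
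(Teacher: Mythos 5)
The paper offers no proof of Theorem~\ref{th vanish}: it is quoted as the Andreotti--Grauert vanishing theorem, with references to \cite{andreotti-grauert} and \cite[Theorem 12.7]{henkin-leiterer}. Your outline --- recasting complete strict $q$-convexity as a $(q+1)$-convex exhaustion of $D$ and then invoking the Henkin--Leiterer integral-formula/bumping proof (or, alternatively, H\"ormander's $L^2$ method) --- is exactly the approach of those cited sources, so it matches the paper's treatment.
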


Henkin and Leiterer \cite[Theorem 5.13]{henkin-leiterer} give the following
criteria of $q$-convexity, which is closely related to our geometric assumption
on $U$ with $q=p-1.$
\begin{theorem}\label{th qconvex}
Let $D$ be an open subset of $\mathbb P^k$ with $\mathcal C^2$ boundary. If for
each $x\in\partial D$ there exists a complex submanifold $Y\subset \mathbb P^k$
of dimension $q+1,$ transverse to $\partial D$ and such that $Y\cap D$ is a
strictly pseudoconvex domain in $Y,$ then $D$ is
strictly $q$-convex.
\end{theorem}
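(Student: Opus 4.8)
The plan is to show, via the slice hypothesis, that \emph{any} $\mathcal C^2$ defining function of $D$ is already $q$-convex along $\partial D$ once its complex Hessian is restricted to the complex tangent bundle of $\partial D$, and then to upgrade this to genuine strict $q$-convexity by a standard convexification in the normal direction. Concretely, I would first fix a $\mathcal C^2$ defining function $r$ for $D$ on a neighborhood $N$ of $\partial D$; such an $r$ exists because $\partial D$ is $\mathcal C^2$ (patch local defining functions with a partition of unity, or take the signed distance to $\partial D$). For $x\in\partial D$ set $H_x:=\ker\partial r(x)=T^{1,0}_x\partial D$. The restriction $L_r(x)|_{H_x}$ of the complex Hessian depends on $r$ only up to multiplication by a positive function: replacing $r$ by $\lambda r$ with $\lambda>0$ adds to $L_r$ either terms carrying the factor $r$, which vanishes on $\partial D$, or the form $\partial\lambda\wedge\overline{\partial}r+\partial r\wedge\overline{\partial}\lambda$, which vanishes on $H_x=\ker\partial r(x)$. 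Hence the number $s(x)$ of strictly positive eigenvalues of $L_r(x)|_{H_x}$ is an invariant of the pointed domain $(D,x)$, and it suffices to prove the inequality $s(x)\geq q$ for every $x\in\partial D$.

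For this, fix $x_0\in\partial D$ and let $Y$ be the complex submanifold of dimension $q+1$ supplied by the hypothesis at $x_0$ (so $x_0\in\partial(Y\cap D)$ and $Y$ is transverse to $\partial D$ at $x_0$). Since $Y$ is transverse to $\partial D$ and $\partial D$ is $\mathcal C^2$, the function $r|_Y$ is a $\mathcal C^2$ defining function for $Y\cap D$ in $Y$ with $d(r|_Y)(x_0)\neq0$; in particular $\partial r(x_0)$ is not identically zero on $T^{1,0}_{x_0}Y$, so $E:=T^{1,0}_{x_0}Y\cap H_{x_0}$ is exactly the complex tangent space to $\partial(Y\cap D)$ in $Y$ at $x_0$, and has complex dimension $(q+1)-1=q$. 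Because the complex Hessian of $r$ restricts on $T^{1,0}Y$ to the complex Hessian of $r|_Y$, strict pseudoconvexity of $Y\cap D$ in $Y$ says precisely that $L_r(x_0)$ is positive definite on this $q$-dimensional subspace $E\subset H_{x_0}$; the min--max principle then gives $s(x_0)\geq q$. This identification — the Levi form of the ambient defining function on the complex tangent directions of the slice equals the Levi form of the induced defining function on $Y$, and transversality is exactly what keeps those $q$ positive directions inside $H_{x_0}$ rather than leaking into the normal direction — is the crux of the argument; everything else is routine.

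To conclude, I would convexify by setting $\rho:=r+Cr^2$ on $N$. On $\partial D$ (where $r=0$) one has, as Hermitian forms, $L_\rho=L_r+2C\,\partial r\wedge\overline{\partial}r$, where $\partial r\wedge\overline{\partial}r$ denotes the rank-one positive-semidefinite form $t\mapsto|\partial r(t)|^2$, which vanishes precisely on $H_x$; thus $L_\rho|_{H_x}=L_r|_{H_x}$ while $L_\rho$ is strictly larger than $L_r$ in every direction transverse to $H_x$. Choosing at each $x\in\partial D$ a $q$-dimensional subspace $E_x\subset H_x$ with $L_r(x)\geq\delta\,\id$ on $E_x$ and a vector $n_x$ with $\partial r(x)(n_x)=1$, a completion-of-the-square estimate shows that for $C$ large enough — uniformly in $x$, using compactness of $\partial D$ and the uniform bounds $\delta>0$ and $\|L_r\|\leq M$ coming from the previous step — the form $L_\rho(x)$ is positive definite on the $(q+1)$-dimensional space $E_x\oplus\mathbb C\,n_x$. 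Hence $\rho$ has at least $q+1$ positive eigenvalues at every point of $\partial D$; since possessing a $(q+1)$-dimensional positive subspace is an open condition on a Hermitian form, the same holds on a neighborhood $V\subset N$ of $\partial D$, and after shrinking $V$ so that $1+Cr>0$ there we have $D\cap V=\{\rho<0\}$ with $\rho$ a $(q+1)$-convex function. Therefore $D$ is strictly $q$-convex.
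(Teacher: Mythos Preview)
Your argument is correct and is essentially the standard proof of this criterion. The only points worth tightening are notational: the identity ``the complex Hessian of $r$ restricts on $T^{1,0}Y$ to the complex Hessian of $r|_Y$'' deserves one line of justification (choose holomorphic coordinates in which $Y$ is a coordinate plane, so the mixed second derivatives agree), and the uniformity of $\delta$ follows because the $q$-th eigenvalue of $L_r(x)|_{H_x}$ varies continuously with $x$ and is positive on the compact $\partial D$ by your slice argument. The convexification $\rho=r+Cr^2$ and the completion-of-the-square step are fine.

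As for comparison with the paper: the paper does \emph{not} prove this statement. It is quoted verbatim as \cite[Theorem~5.13]{henkin-leiterer} and used as a black box to verify that the trapping neighborhood $U$ is strictly $(p-1)$-convex. So there is no ``paper's own proof'' to compare against; your write-up supplies what the paper outsources to Henkin--Leiterer, and the route you take (Levi form on the slice gives $q$ positive eigenvalues in $H_x$, then add $Cr^2$ to gain the normal direction) is exactly the one found there.
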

This result applies to our trapping neighborhood $U$ with $q=p-1.$ Indeed,
observe that, possibly by exchanging $U$ by a slightly smaller open set which
contains $f(U),$ we can assume that $\partial U$ is smooth and the intersection
of $\partial U$ with $I(x)$ is transverse for all $x\in L.$ The projective
space $I(x)$ has dimension $p=q+1$ and $U\cap I(x)$ is strictly convex in
$I(x)\setminus I\simeq \mathbb C^p,$ so in particular strictly pseudoconvex in
$I(x).$ Therefore, by Theorem \ref{th qconvex}, $U$ is strictly $(p-1)$-convex.
In the sequel, we always choose such an attracting neighborhood $U.$

Up to an automorphism of $\mathbb P^k,$ $I$ is defined in
homogeneous coordinates by $I=\{x\in\mathbb P^k\,|\,
x_i=0,\ 0\leq i\leq k-p\}.$ The function
$$\eta(x)=\frac{|x_{k-p+1}|^2+\cdots+|x_{k}|^2}{|x_0|^2+\cdots+|x_{k-p}|^2},$$
is a $(q+1)$-convex exhausting function of $\mathbb P^k\setminus I,$ i.e.
$\mathbb P^k\setminus I$ is \textit{completely $q$-convex}. In general,
strictly $q$-convex subsets of a completely $q$-convex domain are not
completely strictly $q$-convex. However, in our case it is easy to construct
from a $q$-convex function $\rho$ such that
$$U\cap V=\{x\in V\,|\, \rho(x)<0\}$$
for some neighborhood $V$ of $\partial U,$ a $q$-convex defining function
defined in a neighborhood of $\overline U.$ Indeed, it is enough to compose
$(\eta,\rho)$
with a good approximation of the maximum function (see
\cite[Definition 4.12]{henkin-leiterer}). It will give a $(q+1)$-convex
function since the positive eigenvalues of the complex Hessians of $\rho$ and
$\eta$ are in the same directions. Therefore, $U$ is completely strictly
$(p-1)$-convex and we have the following solution for the $\ddc$-equation in
symmetric bidegrees.
\begin{theorem}\label{th ddc}
Let $U$ be as above. If $\varphi$ is a $\mathcal C^2$ $(r,r)$-form in a
neighborhood of
$\overline U$ with $k-p\leq r\leq k,$ then there exists a continuous
$(r,r)$-form $\psi$ on $U$ such that $\ddc\psi=\ddc\varphi$ and
$$\|\psi\|_{\infty,U}\leq C\|\ddc\varphi\|_{\infty,U}$$
for some $C>0$ independent of $\varphi.$
\end{theorem}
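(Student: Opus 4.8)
The plan is to reduce the statement about the $\ddc$-equation to the $\dbar$-equation handled by Theorem~\ref{th hl}, using the decomposition $\ddc=\frac{i}{\pi}\ddbar$ (up to a harmless normalizing constant) and the Hodge-theoretic vanishing provided by Theorem~\ref{th vanish}. The first step is to observe that $U$ is completely strictly $(p-1)$-convex, as established in the discussion preceding the statement, so both Theorem~\ref{th hl} and Theorem~\ref{th vanish} are available with $q=p-1$; note that the hypothesis $k-p\le r\le k$ is exactly the range $k-q\le r\le k$ needed in those theorems.

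Next I would run the standard two-step descent. Given a $\mathcal C^2$ $(r,r)$-form $\varphi$ near $\overline U$, consider $\phi:=\partial\varphi$, which is a $\mathcal C^1$ (hence continuous) $\dbar$-closed $(r+1,r)$-form; it is $\dbar$-exact on $U$ because $H^{r+1,r}(D,\mathbb C)=0$ for a slightly larger completely strictly $(p-1)$-convex $D$ with $\overline U\Subset D$ (here one uses $k-q\le r+1$, which holds since $r\ge k-p$). Actually it is cleaner to work directly with $\ddbar\varphi$: set $\Phi:=\ddbar\varphi$, a continuous $(r+1,r+1)$-form near $\overline U$. One checks $\Phi$ is $\dbar$-closed, and since $H^{r+1,r+1}(D,\mathbb C)=0$ it is $\dbar$-exact near $\overline U$; write $\Phi=\dbar\alpha$ with $\alpha$ of bidegree $(r,r+1)$. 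Then apply Theorem~\ref{th hl} to $\Phi$ (of bidegree $(r+1,r+1)$, legitimate since $k-q\le r+1$) to get a continuous solution $\beta$ of $\dbar\beta=\Phi$ on $U$ with $\|\beta\|_{\infty,U}\le C\|\Phi\|_{\infty,U}\le C\|\ddbar\varphi\|_{\infty,U}$. The $(r,r+1)$-form $\beta-\overline{\beta^{*}}$ — or more carefully, one takes $\beta$ and applies the same argument to the conjugate equation — and then an extra application of Theorem~\ref{th hl} to solve $\dbar\gamma=\partial\psi_0$ at the next stage produces, after taking real parts, a continuous $(r,r)$-form $\psi$ with $\ddbar\psi=\ddbar\varphi$ and the required estimate $\|\psi\|_{\infty,U}\le C\|\ddbar\varphi\|_{\infty,U}=C'\|\ddc\varphi\|_{\infty,U}$.

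More precisely, the cleanest route is: (i) solve $\dbar u=\partial\varphi$ on a neighborhood of $\overline U$ using the vanishing theorem, with $u$ of bidegree $(r,r-1)$; then $\partial\varphi-\dbar u$... — rather, use that $\partial\varphi$ is $\dbar$-closed and $\dbar$-exact near $\overline U$, pick a $\mathcal C^2$ primitive $u$ there; (ii) now $\varphi':=\varphi$ still has $\ddbar\varphi=\partial(\dbar\varphi)$, and one solves $\dbar v=\dbar\varphi-(\text{harmonic part})$... The genuinely efficient argument is to solve directly for the potential of $\ddc\varphi$: since $\ddc\varphi$ is $d$-closed and $(U$ being $(p-1)$-completely-convex) $H^{r,r}$-cohomology in the relevant degree vanishes so that $\ddc\varphi$ is $\ddc$-exact, one applies Theorem~\ref{th hl} twice (once for $\dbar$, once for $\partial=\overline\dbar$ on conjugates) with the $L^\infty$ estimates composing, and takes the real part to land in bidegree $(r,r)$. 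Throughout, the estimates are uniform because the constant $C$ in Theorem~\ref{th hl} depends only on $D$, not on the form.

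I expect the main obstacle to be a bookkeeping one rather than a deep difficulty: carefully tracking bidegrees so that every invocation of Theorem~\ref{th hl} and Theorem~\ref{th vanish} stays inside the admissible range $k-p\le r\le k$ (equivalently $k-q\le r\le k$), and ensuring the two successive $\dbar$-solutions can be combined — via taking real and imaginary parts — into a single real $(r,r)$-form whose $\ddc$ matches $\ddc\varphi$, without losing the $L^\infty$ control. A secondary point requiring care is the regularity mismatch: $\varphi$ is only $\mathcal C^2$, so $\ddbar\varphi$ is merely continuous, which is exactly why one needs the continuous (not smooth) solvability of Theorem~\ref{th hl}; one must check $\partial\varphi$ is regular enough (it is $\mathcal C^1$) to serve as input where a primitive is extracted from the vanishing theorem on a slightly larger domain $D$ with $\overline U\Subset D$.
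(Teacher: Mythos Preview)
Your proposal has a genuine gap. The strategy ``apply Theorem~\ref{th hl} twice and take real parts'' does not close, and the paper's proof uses a different first step that you are missing.

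Concretely: after you solve $\dbar\beta=\ddbar\varphi$ with $\beta$ of bidegree $(r+1,r)$ and $\|\beta\|_{\infty}\lesssim\|\ddbar\varphi\|_{\infty}$, you then need a $\partial$-primitive of $\beta$ (equivalently, a $\dbar$-primitive of $\overline\beta$). But $\beta$ is only continuous, and nothing forces $\partial\beta=0$ even distributionally; all you know is $\dbar(\partial\beta)=0$. So Theorem~\ref{th hl} cannot be invoked a second time. Your alternative line ``use that $\partial\varphi$ is $\dbar$-closed'' is simply false: $\dbar(\partial\varphi)=-\partial\dbar\varphi=-\ddbar\varphi\neq 0$ in general. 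And the obvious $d$-primitive $d^c\varphi$ of $\ddc\varphi$ is useless here because $\|d^c\varphi\|_\infty$ is not controlled by $\|\ddc\varphi\|_\infty$.

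The idea you are missing is a \emph{homotopy formula} for the first primitive. The paper exploits the deformation $A_\theta$ (from Section~\ref{sec disc}) which retracts a neighborhood $W$ of $\overline U$ onto $L$; since $\dim L=k-p\leq r$, the pullback $A_0^*$ kills $(r+1,r+1)$-forms, and the de~Rham homotopy operator produces a form $\xi$ on $W$ with $d\xi=\ddc\varphi$ and $\|\xi\|_{\infty,U}\lesssim\|\ddc\varphi\|_{\infty,U}$. Writing the real form $\xi=\Xi+\overline\Xi$ with $\Xi$ of bidegree $(r,r+1)$, the fact that $d\xi$ has pure bidegree $(r+1,r+1)$ forces $\dbar\Xi=0$ automatically. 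Now a \emph{single} application of Theorem~\ref{th vanish} and Theorem~\ref{th hl} solves $\dbar\Psi=\Xi$ with $\|\Psi\|_{\infty,U}\lesssim\|\Xi\|_{\infty,U}$, and $\psi:=-i\pi(\Psi-\overline\Psi)$ does the job. The homotopy step is what simultaneously provides the $L^\infty$ bound and the $\dbar$-closedness needed downstream; your two-step $\dbar$ scheme provides the former but not the latter.
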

\begin{proof}
The proof follows closely the proof of Theorem 2.7 in \cite{dns-hori}.

Without loss of generality, we can assume that $\varphi$ is real and therefore
$\ddc \varphi$ is also real. First, we solve the equation
$d\xi=\ddc\varphi$ with estimates. Let $W$ be a small neighborhood of
$\overline U,$ with the same geometric property and such that $\varphi$ is
defined on $W.$
The maps $A_\theta$ defined in Section
\ref{sec disc} give a homotopy $A: [0,1]\times W\to W,$
$A(\theta,x)=A_\theta(x),$ between $A_1=\id$ and the projection $A_0$ of $W$ on
$L.$ Since $L$ has dimension $k-p,$ $A_0^*$ vanish identically on
$(r+1,r+1)$-forms if $r\geq k-p.$ Therefore, by homotopy formula (see e.g
\cite[p38]{bott-tu}), there exists a form $\xi$ on $W$ such that $d\xi=\ddc
\varphi$
and $\|\xi\|_{\infty,U}\lesssim \|\ddc\varphi\|_{\infty,U}.$
Moreover, possibly by exchanging $\xi$ by $(\xi+\overline\xi)/2,$ we can assume
that $\xi=\Xi+\overline\Xi$ where $\Xi$ is a $(r,r+1)$-form. As $d\xi$ is a
$(r+1,r+1)$-form, it follows that $\dbar
\Xi=0$ and $d\xi=\partial\Xi+\dbar\overline\Xi.$ Therefore, by
Theorem \ref{th vanish}, $\Xi$ is $\dbar$-exact and by Theorem \ref{th hl},
there
exists a continuous $(r,r)$-form $\Psi$ such that $\dbar\Psi=\Xi$ and
$\|\Psi\|_{\infty,U}\lesssim\|\Xi\|_{\infty,U}.$

Finally, if $\psi=-i\pi(\Psi-\overline\Psi)$ we have
$$\ddc
\psi=\ddbar(\Psi-\overline\Psi)=\partial\Xi+\dbar\overline\Xi=\ddc\varphi,$$
and
$$\|\psi\|_{\infty,U}\lesssim\|\Xi\|_{\infty,U}\lesssim\|\ddc\varphi\|_{\infty,U
}.$$
\end{proof}

\section{Attracting speed}

For $R$ in $\Cc_p(U),$ we denote by $R_n$ its normalized push-forward by $f^n,$
i.e. $R_n:=d^{-(k-p)n}(f^n)_*(R).$ To obtain \eqref{eq main}, the first
observation
is
that the norm of $R_n-\tau,$ seen as a linear form on the space of continuous
test $(k-p,k-p)$-forms, is
bounded independently of $n$ and $R.$ Therefore, it is sufficient to establish
\eqref{eq main} for
$\alpha=2$ and then apply interpolation theory between Banach spaces, see e.g.
\cite{triebel-interpolation}, in order to obtain the general case.

Let denote by $X$ the set of all real continuous $(k-p,k-p)$-forms $\phi$ on
$U$ such that $\ddc\phi=0$ and $|\langle R-\tau,\phi\rangle|\leq1$ for all
$R\in\Cc_p(U).$ Observe that, since $f(U)\Subset U,$ if $\phi$
is in $X$ then $f^*(\phi)$ is defined on $U$ where it is still a real
continuous form with $\ddc(f^*(\phi))=0.$ The set $X$ is a truncated convex
cone and the first part of the proof of Theorem \ref{th main} is to show that
$d^{-(k-p)}f^*$ acts by contraction on it. This result is available without any
assumption on $\|\wedge^{k-p+1}Df\|.$ It is based on Lemma \ref{le harmo} and
Harnack's inequality for harmonic functions.

\begin{lemma}\label{le harnack}
 There exists a constant $0<\lambda_1<1$ such that for any $R$ in $\Cc_p(U),$
$\phi$ in $X$
and $n$ in $\mathbb N$ we have
$$|\langle R_n-\tau,\phi\rangle|\leq \lambda_1^n.$$
\end{lemma}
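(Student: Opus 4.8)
The plan is to exploit the contraction property of $d^{-(k-p)}f^*$ on the cone $X$ together with Lemma \ref{le harmo} and Harnack's inequality, using the push-pull duality $\langle R_n - \tau, \phi\rangle = \langle R_{n-1} - \tau, d^{-(k-p)}f^*\phi\rangle$ (valid because $f(U)\Subset U$, so $f^*\phi$ is a well-defined continuous $\ddc$-closed form on $U$ and $f_*\tau = d^{k-p}\tau$). Iterating this identity $n$ times reduces everything to showing that $d^{-(k-p)}f^*$ maps $X$ into $\lambda_1 X$ for some $0<\lambda_1<1$; then $\langle R_n-\tau,\phi\rangle = \langle R - \tau, (d^{-(k-p)}f^*)^n\phi\rangle$ and the defining bound $|\langle R-\tau,\psi\rangle|\le 1$ for $\psi\in X$ gives $|\langle R_n-\tau,\phi\rangle|\le\lambda_1^n$.

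The core step is therefore the contraction estimate: if $\phi\in X$ then $d^{-(k-p)}f^*\phi \in \lambda_1 X$. Here I would use Dinh's structural discs. Since any current $S$ in $\Cc_p(U')=\Cc_p(f(U))$ is joined to $[L]$ by a structural disc $(S_\theta)_{\theta\in V}$ with $S_1=S$, $S_0=[L]$, and since $\ddc(f^*\phi)=0$ on $U$, Lemma \ref{le harmo} says $h(\theta):=\langle S_\theta, f^*\phi\rangle$ is harmonic on the simply connected neighborhood $V$ of $[0,1]$. Now $\langle S, f^*\phi\rangle = \langle f_*S, \phi\rangle$ and, writing $R$ for an arbitrary element of $\Cc_p(U)$, one has (after the appropriate normalization) $d^{-(k-p)}f_*R \in \Cc_p(U')$; also $\tau$ is obtained as the limit $d^{-(k-p)n}(f^n)_*R$, so $\langle \tau, \phi\rangle = \langle d^{-(k-p)}f_*\tau', \phi\rangle$ with $\tau'$ again representable. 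The point is that $h(1)-h(0) = \langle S - [L], f^*\phi\rangle$ measures the quantity we want to contract, while $|\langle R-\tau, f^*\phi\rangle|$ for all $R$ is, up to the normalization factor, bounded by $1$ because $\phi\in X$; Harnack's inequality applied to the (after subtracting a harmonic conjugate constant, suitably sign-normalized) harmonic function $h$ on a fixed subdomain $V'\Subset V$ containing $[0,1]$ — the key being that $f(U)\Subset U$ makes the structural discs attached to currents in $\Cc_p(f(U))$ stay in a \emph{fixed} slightly smaller region — yields an oscillation bound $|h(1)-h(0)| \le \lambda_1\, \sup_{\theta\in V}|h(\theta) - h(\theta_0)|$, and the right side is controlled by $\sup_{S\in\Cc_p(f(U))}|\langle S - S', f^*\phi\rangle|$, which in turn is $\le (\text{const})\cdot\sup_{R\in\Cc_p(U)}|\langle R-\tau,\phi\rangle| \le \lambda_1$ after absorbing constants into the choice of $\lambda_1$. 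One must check that $d^{-(k-p)}f^*\phi$ again satisfies $|\langle R-\tau, d^{-(k-p)}f^*\phi\rangle|\le\lambda_1\le 1$ for \emph{all} $R\in\Cc_p(U)$, i.e. the image lands in $\lambda_1 X$ and not merely that a single pairing is small; this follows because the Harnack estimate is uniform over the family of structural discs, one attached to each $d^{-(k-p)}f_*R$.

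The main obstacle I anticipate is making the Harnack contraction \emph{uniform} over all $R\in\Cc_p(U)$ and extracting the clean constant $\lambda_1<1$ independent of $R$ and $\phi$. This requires: (i) verifying that for every $R\in\Cc_p(U)$ the normalized $d^{-(k-p)}f_*R$ lies in $\Cc_p(f(U))$ and hence carries Dinh's structural disc with the geometry of $V$ fixed once and for all (this is exactly where $f(U)\Subset U$ and the star-shapedness of $I(x)\cap U$ enter, guaranteeing a common simply connected $V\supset[0,1]$ with $A_\theta(f(U))\Subset U$); (ii) controlling $|\langle S-[L], f^*\phi\rangle|$ not by the sup of $h$ over $V$ — which could a priori be large — but by the oscillation, using that $h$ is harmonic and that the relevant pairings $\langle S_\theta, f^*\phi\rangle = \langle f_*S_\theta, \phi\rangle$ with $f_*S_\theta$ ranging over a subset of $d^{k-p}\Cc_p(U)$, so that $|\langle d^{-(k-p)}f_*S_\theta - \tau, \phi\rangle|\le 1$ bounds the oscillation of $h$ uniformly; and (iii) choosing $V'\Subset V$ with $[0,1]\subset V'$ so that the Harnack constant for the pair $(V',V)$ gives a \emph{fixed} $\lambda_1<1$. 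Once these are in place the iteration is immediate and the lemma follows.
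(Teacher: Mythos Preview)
Your overall strategy is the paper's: show that $d^{-(k-p)}f^*$ maps $X$ into $\lambda_1 X$ and iterate via push--pull. You also correctly point to structural discs and Harnack's inequality. But the Harnack step as you outline it does not yield a contraction.

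You argue by oscillation decay: from harmonicity and $|h|\le 1$ on $V$, deduce $|h(1)-h(0)|\le\lambda_1\cdot\mathrm{osc}_V\,h$, with the right side controlled by $\sup_R|\langle R-\tau,\phi\rangle|\le 1$. Even granting this, it bounds only $h(1)-h(0)$, whereas the quantity you must make $\le\lambda_1$ is $h(1)=\langle R_1-\tau,\phi\rangle$ itself (this is what $|\langle R-\tau,d^{-(k-p)}f^*\phi\rangle|\le\lambda_1$ means). Since a priori $|h(0)|=|\langle[L]-\tau,\phi\rangle|$ can be arbitrarily close to $1$, no bound on $|h(1)-h(0)|$ alone forces $|h(1)|<1$. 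Your identification of ``the quantity we want to contract'' with $h(1)-h(0)$ is the error. (A secondary confusion: you pair the disc $S_\theta$ with $f^*\phi$ rather than with $\phi$, which introduces an extra push-forward; the natural harmonic function is $h_R(\theta)=\langle R_{1,\theta}-\tau,\phi\rangle$, where $R_{1,\theta}=(A_\theta)_*\bigl(d^{-(k-p)}f_*R\bigr)$.)

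The missing ingredient is a separate bound on $h(0)$, and it comes from using the invariance of $\tau$ \emph{inside} the Harnack argument, not merely in the push--pull identity. Take the particular disc with $R=\tau$: since $d^{-(k-p)}f_*\tau=\tau$ one has $h_\tau(1)=0$. Harnack applied to the nonnegative functions $1\pm h_\tau$ then gives $|h_\tau(0)|\le a<1$, with $a$ depending only on $V$. But $R_{1,0}=[L]$ for every $R$, so $h_R(0)=h_\tau(0)$ and hence $|h_R(0)|\le a$ for all $R$. A second application of Harnack, now using $1\pm h_R(0)\ge 1-a>0$ together with $|h_R|\le 1$ on $V$, yields $|h_R(1)|\le\lambda_1<1$ uniformly in $R$ and $\phi$. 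This two-step argument---first pin down the common base value $h(0)$ via the disc for $\tau$, then propagate to $h_R(1)$---is what produces the contraction; an oscillation estimate by itself does not.
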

\begin{proof}
If $R$ is in $\Cc_p(U)$ and $\phi$ in $X,$ $R_1:=d^{-(k-p)}f_*(R)$ is in
$\Cc_p(U')$ and we define the function $h_{R,\phi}$ on
$V$ by $h_{R,\phi}(\theta):=\langle R_{1,\theta}-\tau,\phi\rangle,$ where
$\theta\mapsto  R_{1,\theta}$ is the structural disc described in Section
\ref{sec disc}.
The definition of $X$ implies that $|h_{R,\phi}|\leq1$ on $V,$ for all $R\in
\Cc_p(U)$ and $\phi\in X.$ Moreover, since $R_1$ is in $\Cc_p(U'),$ it
follows
from
Lemma \ref{le harmo} that all these functions are harmonic on $V.$ 

Now, observe that if we take $R=\tau$ then $h_{\tau,\phi}(1)=0$ for all $\phi\in
X,$ since $d^{p-k}f_*\tau=\tau.$ Hence, as $|h_{\tau,\phi}|\leq1$ on $V,$
Harnack's
inequality says that there exists $0\leq a<1$ such that
$|h_{\tau,\phi}(0)|\leq a$ for all $\phi$ in $X.$ On the other hand,
$R_{1,0}$ is a current independent of $R.$ So, for all $R\in \Cc_p(U)$ and
$\phi\in X$ we have $h_{R,\phi}(0)=h_{\tau,\phi}(0)$ and therefore
$|h_{R,\phi}(0)|\leq a.$ Once again, we deduce from Harnack's inequality there
exists $0<\lambda_1<1,$ independent of $R$ and $\phi,$ such that
$|h_{R,\phi}(1)|\leq\lambda_1$ or equivalently 
$$\left|\left\langle
R_1-\tau,\frac{\phi}{\lambda_1}\right\rangle\right|=|\langle R-\tau,
\phi_1\rangle|\leq1,$$
where $\phi_1=d^{-(k-p)}f^*(\phi/\lambda_1).$ Moreover, $\phi_1$ is defined on
$U$ and $\ddc\phi_1=0.$ It follows that $\phi_1$ is in $X.$ Using the same
arguments with $\phi_1$ instead of $\phi$ gives that $|\langle
R_1-\tau,\phi_1\rangle|\leq\lambda_1$ which can be rewrite
$|\langle R_2-\tau,\phi\rangle|\leq \lambda_1^2.$ Inductively, we obtain that
$|\langle
R_n-\tau,\phi\rangle|\leq \lambda_1^n.$
\end{proof}
\begin{remark}
The constant $\lambda_1$ is not directly related to $f.$ Indeed, it only
depends on $V$ i.e. on the size of $U$ and the distance between $\partial U$ and
$\partial f(U).$
If $h$ is the unique biholomorphism between $V$ and the unit disc in $\mathbb C$
such
that $h(0)=0$ and $h(1)=\alpha\in ]0,1[$ then Harnack's inequality gives
explicitly that we can take, in the proof above, 
$a=2\alpha/(1+\alpha)$ and $\lambda_1=4\alpha/(1+\alpha)^2.$
\end{remark}
In order to prove Theorem \ref{th main}, we use Theorem \ref{th ddc} together
with
the assumption on $\|\wedge^{k-p+1}Df\|$ and Lemma \ref{le harnack}.

If $\|\wedge^{k-p+1}Df(z)\|<1$ on
$\overline U$ then by continuity, there exists a constant
$0<\lambda_2<1$ such that $\|\wedge^{k-p+1}Df(z)\|<\lambda_2$ on $U.$
Hence, if $\varphi$ is a $(k-p,k-p)$-form of class $\mathcal C^2,$ we have
for $\varphi_i:=d^{-i(k-p)}(f^i)^*(\varphi)$ with $i\in\mathbb N$
$$\|\ddc\varphi_i\|_{\infty,U}\lesssim
\frac{\lambda_2^{2i}}{d^{i(k-p)}}\|\varphi\|_{\mathcal C^2}.$$
Here, the symbol $\lesssim$ means inequality up to a constant which only
depends on our conventions and on $U.$ By Theorem \ref{th ddc} with $r=k-p$,
there
exists a continuous $(k-p,k-p)$-form $\psi_i$ on $U$ such that
$$\ddc\psi_i=\ddc \varphi_i$$
and
$$\|\psi_i\|_{\infty,U}\lesssim \|\ddc\varphi_i\|_{\infty,U}\lesssim
\frac{\lambda_2^{2i}}{d^{i(k-p)}}\|\varphi\|_{\mathcal C^2}.$$

We can now complete the proof of our main result.
\begin{proof}[End of the proof of Theorem \ref{th main}]
Let $R$ be in $\Cc_p(U)$ and $\varphi$ be a $(k-p,k-p)$-form of class $\mathcal
C^2.$ Without loss of generality, we can assume that $\varphi$ is real. Let
$0\leq i\leq n$ be two arbitrary integers. We set $l:=n-i.$ 
If $R_n,$ $\varphi_i$ and $\psi_i$ are defined as above then we have
\begin{equation*}
\langle R_n-\tau,\varphi\rangle=\langle
R_l-\tau,\varphi_i\rangle=\langle R_l-\tau,\varphi_i-\psi_i\rangle+\langle
R_l-\tau,\psi_i\rangle,
\end{equation*}
since $\tau$ is invariant. On the one hand, 
\begin{equation}\label{eq decoup1}
\langle R_l-\tau,\psi_i\rangle\lesssim
\|\psi_i\|_{\infty,U}\lesssim
\frac{\lambda_2^{2i}}{d^{i(k-p)}}\|\varphi\|_{\mathcal C^2},
\end{equation}
since $R_l$ and $\tau$ are supported on $U.$ On the other hand, observe that
there exists a constant $M\geq1$ independent of $\varphi$ such that
$\|d^{-(k-p)}f^*(\varphi)\|_\infty\leq M\|\varphi\|_\infty.$ Therefore,
\begin{align*}
\|\varphi_i-\psi_i\|_{\infty,U}\leq
M^i\|\varphi\|_{\infty}+\|\psi_i\|_{\infty, U}&\leq
M^i\|\varphi\|_{\infty}+C\frac{\lambda_2^{2i}}{d^{i(k-p)}}\|\varphi\|_{\mathcal
C^2}\\
&\lesssim M^i\|\varphi\|_{\mathcal C^2},
\end{align*}
and in particular
$$|\langle S-\tau,\varphi_i-\psi_i\rangle|\lesssim M^i\|\varphi\|_{\mathcal
C^2},$$
for any $S$ in $\Cc_p(U).$

Moreover, $\varphi_i-\psi_i$ is a real continuous
$(k-p,k-p)$-form on $U$ and $\ddc(\varphi_i-\psi_i)=0.$ Hence,
$(\varphi_i-\psi_i)/(CM^i\|\varphi\|_{\mathcal
C^2})$ belongs to $X$ where $C>0$ is a constant depending only on $U$ and on
our conventions. It follows by Lemma \ref{le harnack} that
\begin{equation}\label{eq decoup2}
 |\langle R_l-\tau,\varphi_i-\psi_i\rangle|\leq CM^i\|\varphi\|_{\mathcal
C^2}\lambda_1^l.
\end{equation}
To summarize, equations \eqref{eq decoup1} and \eqref{eq decoup2} imply that
there are constants $0<\lambda_1,\lambda_2<1,$ and $M\geq 1$
such that
$$|\langle R_n-\tau,\varphi\rangle|\lesssim \|\varphi\|_{\mathcal
C^2}\left(M^i\lambda_1^l+\frac{\lambda_2^{2i}}{d^{i(k-p)}}\right).$$
If $q\in\mathbb N$ is large enough then $M\lambda_1^q<1.$ Therefore, if we
choose $n\simeq(q+1)i$, we obtain $l\simeq iq$ and
$$|\langle R_n-\tau,\varphi\rangle|\lesssim \|\varphi\|_{\mathcal
C^2}\lambda^n,$$
where $\lambda:=\max(\lambda_2^2d^{-(k-p)},M\lambda_1^q)^{1/(q+1)}<1.$ This
estimate holds for
arbitrary $n$ in $\mathbb N$ and is uniform on $\varphi$ and $R.$
\end{proof}
\begin{remark}
 In Theorem \ref{th main}, it is enough to assume that
$\|\wedge^{k-p+1}Df(z)\|< d^{(k-p)/2}$ on $\overline U.$ Moreover, it is easy
using small perturbations of a suitable polynomial map to construct examples
with $\|\wedge^{k-p+1}Df(z)\|$ as small as we want on $\overline U.$
\end{remark}

\section{Hyperbolicity of the equilibrium measure}
In this section, we prove Theorem \ref{th hyp}. Recall that the equilibrium
measure associated to $A$ is given by $\nu:=\tau\wedge T^{k-p}.$ It has maximal
entropy on $A$ equal to $(k-p)\log d,$ \cite{d-attractor}. On the other hand, we
have the following powerful result, see \cite{dethelin-lyap} and
\cite{dupont-large-entropy}.
\begin{theorem}\label{th dT}
If the Lyapunov exponents of $\nu$ are ordered so that
$$\chi_1\geq\cdots\geq
\chi_{a-1}>\chi_a\geq\cdots\geq\chi_k,$$
then
\begin{equation}\label{eq entropy}
h(\nu)\leq (a-1)\log d+2\sum_{i=a}^k\chi_i^+,
\end{equation}
where $h(\nu)$ denotes the entropy of $\nu$ and $\chi_i^+:=\max(\chi_i,0).$
\end{theorem}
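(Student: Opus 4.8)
The plan is to obtain \eqref{eq entropy} as an interpolation between Ruelle's inequality (the case $a=1$, where the right-hand side reduces to $2\sum_{i\ge 1}\chi_i^+$) and Gromov's estimate $h(\nu)\le k\log d$ (the case $a=k+1$, where the right-hand side reduces to $k\log d$): the hypothesis $\chi_{a-1}>\chi_a$ is exactly what lets the two mechanisms behind these two bounds be decoupled. We may assume $\nu$ is ergodic — the only case actually needed here, since $\nu=\tau\wedge T^{k-p}$ is mixing — so that Oseledec's theorem applies with $\nu$-a.e.\ constant exponents.

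First I would run Oseledec--Pesin theory for $(f,\nu)$. The gap $\chi_{a-1}>\chi_a$ produces, along $\nu$-a.e.\ orbit, a measurable dominated splitting $T_x\mathbb P^k=E_x\oplus F_x$ with $E$ of complex dimension $a-1$ carrying the exponents $\chi_1\ge\cdots\ge\chi_{a-1}$ and $F$ of complex dimension $k-a+1$ carrying $\chi_a\ge\cdots\ge\chi_k$. Working on Pesin blocks where $E$, $F$ and all the estimates are uniformly controlled, and letting the mass of the block tend to $1$, I would bound the metric entropy by a sum of a \emph{transverse} term, measuring expansion in the real directions of $F$, and a \emph{leafwise} term, measuring the complexity of the induced dynamics on $(a-1)$-dimensional complex disks tangent to $E$. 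Concretely this is done by choosing a finite generating partition adapted to the block and to a scale $\epsilon$ compatible with the hyperbolicity, and performing the Margulis--Ruelle bookkeeping relative to the subbundle $F$.

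For the transverse term, the classical Margulis--Ruelle argument — counting $(n,\epsilon)$-separated points only through their spreading in the $2(k-a+1)$ real directions of $F$ — yields the bound $2\sum_{i=a}^k\chi_i^+$; the factor $2$ appears because each complex Lyapunov exponent $\chi_i$ occurs twice among the real Lyapunov exponents of $f$. For the leafwise term, a Yomdin--Newhouse type local entropy estimate bounds it by the exponential growth rate of $\int_\Delta(f^n)^*\omega^{a-1}$ taken over $(a-1)$-dimensional complex disks $\Delta$ of bounded size, $\omega$ the Fubini--Study form. Here holomorphy is essential: $(f^n)^*\omega^{a-1}$ is cohomologous to $d^{(a-1)n}\,\omega^{a-1}$, so by positivity — equivalently, by Bézout applied to the analytic set $f^n(\Delta)$ and a generic projective subspace of dimension $k-a+1$ — one gets $\int_\Delta(f^n)^*\omega^{a-1}\le C\,d^{(a-1)n}$, hence a growth rate at most $(a-1)\log d$. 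In other words the leafwise term is controlled by the $(a-1)$-st volume growth of $f$, which on $\mathbb P^k$ equals $(a-1)\log d$. Summing the two contributions gives \eqref{eq entropy}.

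The principal obstacle is making the split of the entropy into a transverse Ruelle part and a leafwise volume-growth part rigorous: the bundle $E$ is only measurable, so the whole argument must be run on Pesin sets where $E$ is uniformly continuous, with the generating partition, the scale $\epsilon$, and the hyperbolicity block chosen compatibly, and with careful control of the error terms as the block exhausts $\nu$. The second delicate point is the leafwise local-entropy inequality: in the $C^\infty$ category it rests on Yomdin's reparametrization theorem together with Newhouse's inequality bounding topological entropy by volume growth, whereas in the present holomorphic setting one can argue directly with the masses of the currents $(f^n)_*[\Delta]$, which both streamlines the estimate and pins the constant to the dynamical degree $d_{a-1}(f)=d^{a-1}$. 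This is precisely the content of the theorems of de Th\'elin and Dupont cited above.
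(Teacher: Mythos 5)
You should first note that the paper itself gives no proof of Theorem \ref{th dT}: it is quoted as a known result of de Th\'elin \cite{dethelin-lyap} and Dupont \cite{dupont-large-entropy} and is used as a black box in the derivation of Theorem \ref{th hyp}. So the only fair comparison is with those references, whose general strategy your sketch does mirror: split the orbit complexity along the Oseledec gap $\chi_{a-1}>\chi_a$, bound the contribution of the $(a-1)$ ``fast'' complex directions by the volume growth of $(a-1)$-dimensional complex disks, which is $O(d^{(a-1)n})$ by a B\'ezout/cohomological positivity argument, and bound the remaining directions by a Margulis--Ruelle count, the factor $2$ coming from each complex exponent appearing twice among the real ones.

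However, taken as a proof your text has genuine gaps precisely at the steps that constitute the content of the theorem. The decomposition of $h(\nu)$ into a ``transverse'' plus a ``leafwise'' term relative to a merely measurable Oseledec splitting is not a standard lemma you can invoke: $E$ and $F$ are only measurable (and since $f$ is non-invertible one must pass to the natural extension even to get a splitting rather than a filtration), Pesin blocks are not invariant, and there is no off-the-shelf inequality of the form ``entropy $\leq$ leafwise entropy $+$ transverse entropy'' for such a splitting; making this rigorous is the heart of de Th\'elin's and Dupont's arguments, not a preliminary reduction. Likewise the leafwise local-entropy bound by $\limsup_n \frac1n\log\int_\Delta(f^n)^*\omega^{a-1}$ is asserted rather than proved. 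Your closing sentence concedes that these steps ``are precisely the content of the theorems of de Th\'elin and Dupont,'' which makes the argument circular if it is intended as a proof of the statement. As an outline of where the terms $(a-1)\log d$ and $2\sum_{i\geq a}\chi_i^+$ come from it is sensible, but it is not a self-contained proof; within this paper the correct move is simply to cite \cite{dethelin-lyap} and \cite{dupont-large-entropy}, as the author does.
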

Now, let $1\leq c\leq k$ be such that
$$\chi_1\geq\cdots\geq
\chi_c>0\geq\chi_{c+1}\geq\cdots\geq\chi_k.$$
If we take $a=c+1$ in Theorem \ref{th dT}, we obtain $h(\nu)\leq c\log d.$
Since $h(\nu)=(k-p)\log d,$ it follows that $c\geq (k-p).$ It means there
are at least $k-p$ strictly positive Lyapunov exponents. Moreover, if we have
equality, $c=k-p,$ Theorem \ref{th dT} applied to the smallest $a$ such that
$\chi_a=\chi_c$ gives
$$(k-p)\log d=h(\nu)\leq (a-1)\log d+2(k-p-a+1)\chi_c.$$
Hence, $\chi_c\geq (\log d)/2.$ Note that in this part we do not need the
assumption on $\|\wedge^{k-p+1}Df\|.$

It remains to prove that the assumptions of Theorem \ref{th main} imply that
$c\leq k-p$ and $\chi_{c+1}<-(k-p)(\log d)/2.$ It is not hard to deduce form
Oseledec theorem \cite{oseledec} that the sum of the $q$ largest Lyapunov
exponents
verifies
$$\chi_1+\cdots+\chi_q=\lim_{n\to\infty}\frac{1}{n}\log\|\wedge^qDf^n(z)\|,
$$
for $\nu$-almost all $z.$ Moreover, we have
$$\|\wedge^qDf^{n+m}(z)\|\leq \|\wedge^qDf^n(z)\|\|\wedge^qDf^m(f^n(z))\|.$$
Therefore, it follows that
$$\|\wedge^qDf^{n}(z)\|\leq (\max_{z\in U}\|\wedge^qDf(z)\|)^n$$
and
$$\chi_1+\cdots+\chi_q\leq\log\max_{z\in U}\|\wedge^qDf(z)\|=:\gamma.$$
Hence, if $\|\wedge^{k-p+1}Df(z)\|<1$ on $\overline U$ then
$$\chi_1+\cdots+\chi_{k-p+1}\leq\gamma<0.$$
Therefore, $c\leq k-p$ and we have seen above that in this case $c=k-p$ and
$\chi_c\geq(\log d)/2.$ Finally, we have
$$\gamma\geq\chi_1+\cdots+\chi_{k-p}+\chi_{k-p+1}\geq \frac{k-p}{2}\log
d+\chi_{k-p+1},$$
which implies 
$$\chi_{k-p+1}\leq\gamma-\frac{k-p}{2}\log d.$$
\begin{remark}
 Theorem \ref{th dT} with $a=1$ implies the Ruelle inequality, i.e.
$$\chi_1+\cdots+\chi_c\geq\frac{k-p}{2}\log d.$$
Therefore, it is enough to assume that
$\|\wedge^{k-p+1}Df(z)\|<d^{(\frac{k-p}{2})(\frac{k-p+1}{k})}$
on $\overline U$ since
$$\chi_1+\cdots+\chi_{k-p+1}\geq \frac{k-p+1}{c}(\chi_1+\cdots+\chi_c),$$
if $c\geq k-p+1.$
\end{remark}

\bibliographystyle{alpha}

\noindent
J. Taflin, UPMC Univ Paris 06, UMR 7586, Institut de
Math{\'e}matiques de Jussieu, F-75005 Paris, France. {\tt 
taflin@math.jussieu.fr} \\
Universitetet i Oslo, Mathematisk Institutt, Postok 1053 Blindern, 0316 Oslo,
Norway. {\tt johantaf@math.uio.no}
\end{document}